\newcommand{\R}{\mathbb{R}}
\newcommand{\CP}{\mathbb{C}\mathrm{P}}
\newcommand{\C}{\mathbb{C}}
\newcommand{\Z}{\mathbb{Z}}
\newcommand{\Ric}{\mathrm{Ric}}
\newcommand{\K}{K\"{a}hler}
\newcommand{\Sz}{Szeg\"{o}}
\newtheorem{theor}{Theorem}
\newtheorem{lem}[theor]{Lemma}
\newtheorem{cor}[theor]{Corollary}
\newtheorem{ex}{Example}
\newtheorem{remar}[theor]{Remark}
\begin{document}
\title[SZEG\"{O} kernel,  regular quantizations...] {SZEG\"{O}  kernel, regular quantizations and spherical CR-structures}

\author{Claudio Arezzo}
\address{Abdus Salam International Center for Theoretical Physics \\
                  Strada Costiera 11 \\
         Trieste (Italy) and Dipartimento di Matematica \\
         Universit\`a di Parma \\
         Parco Area delle Scienze~53/A  \\
         Parma (Italy)}
\email{arezzo@ictp.it}

\author{Andrea Loi}
\address{Dipartimento di Matematica \\
         Universit\`a di Cagliari}
         \email{loi@unica.it}

\author{Fabio Zuddas}
\address{Dipartimento di Matematica e Informatica \\
          Via delle Scienze 206 \\
         Udine (Italy)}
\email{fabio.zuddas@uniud.it}

\thanks{
The first author was supported  by the M.I.U.R. Project \lq\lq Geometric
Properties of Real and Complex Manifolds'' and by ESF within the program \lq\lq Contact and Symplectic Topology"}
\date{}
\subjclass[2000]{53C55; 58C25;  58F06}
\keywords{ \Sz\  kernel, Ramadanov conjecture; regular  quantizations; TYZ
asymptotic expansion; log-term; CR-structures}

\begin{abstract}
We compute the \Sz\  kernel of the unit circle bundle of a  negative line bundle dual to a regular quantum  line bundle over a compact \K\ manifold.
As a corollary we provide an infinite family of smoothly bounded strictly pseudoconvex domains  on complex manifolds
(disk bundles over homogeneous Hodge manifolds) for which the log-terms in the Fefferman expansion of the \Sz\ kernel vanish and which are not locally CR-equivalent to the sphere. We also give a  proof of the fact that, for homogeneous Hodge manifolds, the existence of a locally spherical CR-structure on the unit circle bundle alone implies that the manifold is biholomorphic to a projective space. Our results  generalize those obtained by  M. Engli\v{s} and G. Zhang in  \cite{englisramadanov} for Hermitian symmetric spaces of compact type. 
\end{abstract}

\maketitle

\section{Introduction}
Let $(L, h)$ be a positive Hermitian line bundle over a compact \K\ manifold $(M, g)$ of complex dimension $n$, such that $\Ric (h)=\omega_g$,, where $\omega_g$ denotes the \K\ form associated to $g$
and $\Ric (h)$  is the two--form on $M$ whose
local expression is given by
$\Ric (h)=-\frac{i}{2}
\partial\bar\partial\log h(\sigma (x), \sigma (x)),$
for a trivializing holomorphic section $\sigma :U\rightarrow
L\setminus\{0\}$. In the quantum mechanics terminology the pair $(L, h)$ is called a {\em geometric quantization} of $(M, \omega_g)$ and $L$ the {\em quantum line bundle}.
Consider the negative Hermitian line bundle $(L^*,h^* )$ over $(M, g)$ dual   to $(L, h)$ and
let $D\subset L^*$ be the unit disk bundle over $M$, i.e.
\begin{equation}
D=\{v\in L^* \ |\  \rho (v):=1-h^*(v, v)>0\}.
\end{equation}
It is not hard to see (and well-known)  that the condition $\Ric (h)=\omega_g$ implies that $D$ is a strongly pseudoconvex   domain in $L^*$ with smooth boundary
$X=\partial D=\{v\in L^*\ |\ \rho (v)=0\}$. $X$ will be called  the  {\em unit circle bundle}.

Consider the  separable Hilbert space
${\mathcal H}^2(X)$ consisting
of  all holomorphic functions $f: L^*\rightarrow\C$ with scalar product given by
$\int_Xf\bar gd\mu$
where $d\mu=\alpha\wedge (d\alpha)^n$ and 
$$\alpha =-i\partial\rho_{|X}=i\bar\partial\rho_{|X}$$
 is the standard contact form on $X$ associated to the strongly pseudoconvex domain $D$
\footnote{Notice that the holomorphic functions of $L^*$ are automatically $L^2$-integrable on $X$ due to the compactness of $M$ and  of $X$.}.

Let $\{f_j\}_{j=1, \dots}$ be an orthonormal basis of ${\mathcal H}^2(X)$, i.e.  
$$\int_Xf_j\bar {f_k}d\mu=\delta_{jk}.$$

The  Szeg\"{o} kernel of $D$ is defined by:
$${\mathcal S}(v)=\sum_{j=1}^{+\infty} f_j(v)\overline {f_j(v)},\  v\in D.$$

A fundamental result of C. Fefferman  \footnote{Originally proved  for the boundary singularities of the  Bergman kernel of a  strongly pseduconvex domain in $\C^n$.}  \cite{F} asserts  that 
there exist $a, b\in C^{\infty} (\bar D)$, $a\neq 0$ on $X=\partial D$ such that:
\begin{equation}\label{fefferman}
{\mathcal S}(v)=a(v)\rho(v)^{-n-1}+b(v)\log\rho (v), \ v\in D
\end{equation}
where $\rho (v)=1-h^*(v, v)$ is the defining function of $D$.

The function $b(v)$ in (\ref{fefferman}) is called the {\em logarithmic term}  ({\em log-term} from now on) of the Szeg\"{o} kernel.  One says that the log-term  of the Szeg\"{o} kernel of the disk  bundle $D\subset L^*$ vanishes if $b=0$.

\begin{ex}\label{fundexample}\rm
The basic example here  is  the complex projective space $\CP^n$ equipped with the 
Fubini--Study \K\ form $\omega_{FS}$,  $L=O(1)$  the hyperplane bundle and $L^*=O(-1)$ the canonical bundle over
$\CP^n$. The 
corresponding unit  circle bundle $X\rightarrow \C P^n$ is CR-equivalent to $S^{2n+1}$
via the map $S^{2n+1}\rightarrow X, z\mapsto (\C z, z)$ and 
the restriction to $X$ of the  projection $\pi :L^*\rightarrow \CP^n$  is the Hopf fibration. Morever, a direct computation shows that the log-term of the Szeg\"{o} kernel of the disk bundle vanishes. More generally,  for any nonnegative integer $m$, one considers the line bundle $L^m=O(m)$ over $\CP^n$ and its dual $L^{*m}=O(-m)$. In this case the unit circle bundle $X$ is  CR-equivalent to the   lens space $S^{2n+1}/  \Z _m$. The diffeomorphism this time is given by  the map $z\mapsto (\C z, \otimes^m z)$ from the sphere $S^{2n+1}$  which induces a diffeomorhism form 
 $S^{2n+1}/  \Z _m$ onto $X$ (see e.g.
 \cite[p. 542]{komuro}).
 Also in this case the log-term of the Szeg\"{o} kernel of the disk bundle vanishes
 (this follows by a direct computation or as a very special case of  Theorem \ref{corolmainteorreg} below).
 \end{ex}
 
Therefore the  following questions  naturally arise.

 \vskip 0.3cm
 
\noindent
{\bf Question 1.}
Let $D$ be as above. Suppose that the log-term of the \Sz\ kernel of $D$ vanishes. Is it the boundary $X$ of 
$D$ locally CR-equivalent to the sphere?

 \vskip 0.3cm
 
\noindent
{\bf Question 2.}
Assume that   $X$ is (locally) CR-equivalent to the sphere. Is it true that $M$ is biholomorphic to $\CP^n$?

\vskip 0.3cm

Question 1 is a reasonable question since the singularity of the  \Sz\ kernel  is determined locally by the CR-structure of the boundary. This question is analogous to  Ramadanov's conjecture  asserting that if $\Omega\subset\C^n$  is a  strongly pseudoconvex domain with smooth boundary such that  its Bergman kernel has no logarithmic term then $\Omega$ is biholomorphic  to the unit ball in $\C^n$ \footnote{Observe that in  our situation the disk bundle $D$
 is not  even   homotopically equivalent to a disk in $\C ^n$ since the  the homotopy type of  the disk bundle is the same of its base (compact) manifold  $M$.}.
Questions $1$ and $2$ are inspired by (and implicitly contained in) the paper  of   Z. Lu and G. Tian \cite{logterm} (see also the next section)  and  by the work of M. Engli\v{s} and G. Zhang  \cite{englisramadanov} who 
provides a negative answer to Question 1 and a positive answer to Question 2  when  the disk bundle $D\subset L^*$ arises from  a positive line bundle $L$ over an Hermitian symmetric space of compact type. 
 Indeed, in this case  he computes  the Szego kernel of $D$, proves that  the log-term vanishes and that $X=\partial D$ is locally spherical iff $M=\CP^n$.
In the same paper he asks (Question 4 in \cite{englisramadanov}) if there were other cases where the log-term of the Szeg\"{o} kernel of the disk bundle vanishes.

One of the aim of the paper is to show that there are indeed many other cases where this occurs. This is   expressed by the following theorem and its corollary which is the first result of the paper (the reader is referred to the next section for details and references on regular quantizations). 

\vskip 0.5cm

 \begin{theor}\label{mainteorreg}
Let $(L, h)$ be a regular quantization of a compact \K\ manifold $(M, \omega)$ and let $X\rightarrow M$ be
the unit circle bundle of $(L^*, h^*)$.
Then the log-term of the  \Sz\ kernel  of $D$ vanishes.
\end{theor}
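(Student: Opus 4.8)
The plan is to compute the Szegő kernel $\mathcal{S}(v)$ of the disk bundle $D \subset L^*$ explicitly in terms of the Kempf distortion function (the function measuring the pointwise norm of the Bergman/coherent-state projection) and then exhibit it in Fefferman form with vanishing $b$. The key structural fact is the natural $S^1$-action on the circle bundle $X$ (rotation in the fibers of $L^*$), which gives a grading $\mathcal{H}^2(X) = \bigoplus_{m \ge 0} \mathcal{H}^2_m(X)$, where $\mathcal{H}^2_m(X)$ is canonically identified with the space $H^0(M, L^m)$ of global holomorphic sections via restriction of holomorphic functions on $L^*$ to $X$, with an inner product proportional to the natural $L^2$-product on sections induced by $h$ and $\omega_g^n$. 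This identification, together with a Fourier decomposition in the fiber variable, lets us write $\mathcal{S}(v) = \sum_{m\ge 0} c_m\, h^{*m}(v,v)^{?}\, \epsilon_m(\pi(v))$ — more precisely, writing $v = e^{i\theta}\sqrt{t}\,\xi$ for a unit vector $\xi \in L^*_x$ with $t = h^*(v,v) = 1-\rho$, one gets a power series $\mathcal{S}(v) = \sum_{m} a_m t^m$ whose coefficients $a_m$ involve the $m$-th Kempf distortion function $\epsilon_m(x)$ and a one-dimensional integral over the fiber circle.

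\smallskip

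\textbf{Exploiting regularity.} The hypothesis that $(L,h)$ is a \emph{regular} quantization means exactly that the Kempf distortion functions $\epsilon_m$ are constant on $M$ for all sufficiently large $m$ (and hence, by the general theory, the full Tian--Yau--Zelditch expansion has constant coefficients), so $\epsilon_m(x) = \dim H^0(M,L^m)/\mathrm{vol}(M) =: b_m$ is a constant, computed by Riemann--Roch as a polynomial in $m$ of degree $n$ for large $m$. Substituting these constants, the series for $\mathcal{S}$ becomes (up to the finitely many small-$m$ terms, which contribute only a polynomial in $t$ that is smooth up to the boundary and hence irrelevant to the log-term) an explicit generating function $\sum_m P(m) t^m$ for a polynomial $P$ of degree $n$. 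Such a generating function is a rational function of $t$ with a single pole at $t = 1$, namely a linear combination of $(1-t)^{-1}, (1-t)^{-2}, \dots, (1-t)^{-n-1}$ with no $\log(1-t)$ term whatsoever; since $\rho = 1 - t$, this is precisely the statement that $\mathcal{S} = a\,\rho^{-n-1} + b\log\rho$ with $b \equiv 0$ and $a$ smooth up to $X$ and nonzero there.

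\smallskip

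\textbf{Main obstacle.} I expect the technical heart to be two-fold. First, one must carefully justify the reduction to the generating function: the identity $\mathcal{S}(v) = \sum_m \epsilon_m(\pi v) t^m \cdot (\text{const})$ requires picking the right normalization of the contact measure $d\mu = \alpha \wedge (d\alpha)^n$ and relating the fiber integration to the Liouville measure on $M$ — this is where the precise constants and the identification of $\mathcal{H}^2_m(X)$ with sections must be pinned down, and where an appeal to the known asymptotics (Tian--Zelditch, or the earlier references on regular quantizations cited in the next section) is needed to control convergence and the small-$m$ terms. Second, one must handle the finitely many values of $m$ for which $\epsilon_m$ may fail to be constant: these produce a \emph{polynomial} correction to $\mathcal{S}$ in the variable $t = 1-\rho$, which is visibly real-analytic on all of $\bar D$ and therefore cannot affect $b$; packaging this cleanly (rather than assuming $\epsilon_m$ constant for all $m$) is a small but necessary point. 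Once these are in place, the vanishing of the log-term is immediate from the elementary fact that $\sum_{m\ge 0} P(m)\,t^m$ has no logarithmic singularity at $t=1$ when $P$ is a polynomial.
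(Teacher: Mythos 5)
Your proposal is correct and follows essentially the same route as the paper: Fourier decomposition of $\mathcal{H}^2(X)$ under the $S^1$-action, identification of $\mathcal{H}^2_m(X)$ with $H^0(L^m)$ so that $\mathcal{S}(v)=\sum_m T_{mg}(\pi(v))\,t^m$ with $t=h^*(v,v)=1-\rho$, and then regularity plus Riemann--Roch to reduce to the generating function $\sum_m P(m)t^m$, which is a linear combination of $(1-t)^{-k-1}$ and hence has no logarithmic singularity. The only cosmetic difference is that the paper expands $h^0(L^m)$ explicitly in binomial coefficients $\binom{m+k}{k}$ rather than invoking the general fact about polynomial generating functions, and it does not need your "finitely many exceptional $m$" caveat since regularity is assumed for all $m\geq 1$.
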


Notice that a  compact homogeneous Hodge manifold (i.e. a simply-connected compact and homogeneous  \K\ manifold)
admits a regular quantization  (see  next section).
Hence, we get the following corollary of Theorem \ref{mainteorreg}.
\begin{cor}\label{corolmainteorreg}
Let $(M, g)$ be a  homogeneous Hodge manifold  and  let $(L, h)$ be the Hermitian line bundle over $M$ such that $\Ric (h)=\omega_g$. Then the log-term of the Szeg\"{o} kernel of the disk bundle $D\subset L^*$ vanishes.  
 \end{cor}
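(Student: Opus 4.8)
The plan is to reduce the corollary to Theorem \ref{mainteorreg}: it is enough to verify that the Hermitian line bundle $(L,h)$ with $\Ric(h)=\omega_g$ --- the quantum line bundle of $(M,\omega_g)$ in the sense of the Introduction --- is in fact a \emph{regular} quantization, in the sense recalled in the next section. Granting this, Theorem \ref{mainteorreg} applies verbatim and yields that the log-term in the Fefferman expansion (\ref{fefferman}) of the \Sz\ kernel of $D\subset L^*$ vanishes, which is exactly the assertion of the corollary.

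The substantive point is therefore that a homogeneous Hodge manifold $(M,g)$, together with this $(L,h)$, carries a regular quantization. Write $M=G/K$ with $G$ a compact Lie group acting transitively on $M$ by holomorphic isometries of $g$. Since $[\omega_g]$ is an integral \K\ class and $M$ is simply connected (so that $H^{1}(M,\mathcal{O}_M)=0$), the class $[\omega_g]$ determines $L$ uniquely, while $\Ric(h)=\omega_g$ pins down $h$ up to a positive constant; consequently the $G$-action lifts to an action on $L$, and hence on every tensor power $L^m$, by automorphisms of the Hermitian holomorphic line bundle $(L^m,h^m)$. By the Kodaira embedding theorem $H^0(M,L^m)\neq 0$ for all large $m$, and for such $m$ the distortion function $\varepsilon_{mg}$ entering the definition of regular quantization recalled in the next section --- built from an $L^2$-orthonormal basis of $H^0(M,L^m)$ together with the metric $h^m$ --- is a smooth function on $M$ canonically attached to the data $(M,mg,L^m,h^m)$, hence invariant under the lifted $G$-action. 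Since $G$ acts transitively on $M$, $\varepsilon_{mg}$ is constant; as this holds for all sufficiently large $m$, $(L,h)$ is a regular quantization of $(M,\omega_g)$, and the corollary follows at once from Theorem \ref{mainteorreg}. (In particular, since Hermitian symmetric spaces of compact type are homogeneous Hodge manifolds, this recovers the Engli\v{s}--Zhang statement quoted in the Introduction.)

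I expect the only non-formal step to be the lifting of the transitive automorphism group $G$ to the Hermitian line bundles $(L^m,h^m)$ and the attendant observation that the distortion function, being intrinsically defined, is $G$-invariant and hence constant; this is a rather mild obstacle. Everything else is a direct invocation of Theorem \ref{mainteorreg} together with standard facts about Hodge manifolds --- ampleness of $L$ by positivity of $[\omega_g]$, and the vanishing of $H^{1}(M,\mathcal{O}_M)$ for simply connected $M$.
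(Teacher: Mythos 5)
Your proposal is correct and follows the paper's route exactly: the corollary is deduced from Theorem \ref{mainteorreg} via the fact that a compact homogeneous Hodge manifold admits a regular quantization. The only difference is that the paper simply cites \cite[Theorem 5.1]{arlquant} for that fact, whereas you supply (correctly) the standard argument for it, lifting the transitive group action to $(L^m,h^m)$ using $H^1(M,\mathcal{O}_M)=0$ and invoking the basis-independence of the distortion function.
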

 It is worth pointing out that it  is still an open problem to understand if there exist non-homogeneous \K\ manifolds which admit a regular quantization. Nevertheless the set of compact homogeneous Hodge manifolds  strictly includes the set of Hermitian symmetric spaces of compact type and so our  corollary provides new  examples, different from the Hermitian symmetric spaces, where the log-term vanishes.

Turning our attention to Question $2$, we prove  that, among all homogeneous spaces, $\CP^n$ is the only one for which 
  $X=\partial  D$ is locally CR-equivalent  to the sphere $S^{2n+1}$ at some point. This is  expressed by  the following theorem which represents  our second result and generalizes the above mentioned results of  M. Engli\v{s} and G. Zhang  valid for Hermitian symmetric  spaces of compact type. 
 \begin{theor}\label{homsph}
 If $M$ is a homogeneous Hodge manifold and $X$ has a CR-structure locally equivalent to $S^{2n+1}$ at same point, then $M$ is biholomorphic to the complex projective space and $L$ is a multiple of the tautological line bundle.
  \end{theor}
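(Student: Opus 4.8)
The plan is to use homogeneity to promote the pointwise hypothesis to the statement that $X$ is globally a spherical CR manifold, then to translate this into a curvature condition on $M$, and finally to invoke a rigidity theorem to force $M=\CP^n$. The case $n=1$ is immediate: then $M$ is a simply connected compact homogeneous Riemann surface, hence $\CP^1$, and $[\omega_g]=c_1(L)$ forces $L=O(m)$ for some $m\in\N$; so from now on assume $n\geq 2$.

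First I would exploit homogeneity. Write $M=G/K$ with $G$ a connected compact Lie group acting transitively on $M$ by holomorphic isometries, and (after passing to a finite cover of $G$ if necessary) lift this action to $L^{*}$. Averaging over $G$ any Hermitian metric on $L$ yields a $G$-invariant one whose curvature form is $G$-invariant and cohomologous to $\omega_g$, hence equal to $\omega_g$ since $G$-invariant functions on the compact $M$ are constant; as $\Ric(\cdot)=\omega_g$ has a solution unique up to a positive multiplicative constant, the given $h$ is itself $G$-invariant, and therefore so are $\rho$ and $\alpha=-i\partial\rho_{|X}$. Together with the fibrewise circle action on $L^{*}$, which preserves $h^{*}$ and hence $\rho$ and $\alpha$, this makes $X$ a homogeneous CR manifold: $G\times S^1$ acts transitively on $X$ by pseudohermitian automorphisms. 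Since the Chern--Moser tensor $S$ of a strictly pseudoconvex CR manifold of real dimension $\geq 5$ is a natural tensorial invariant, it is preserved by every CR automorphism, so its zero set in $X$ is invariant under a transitive group and is therefore empty or all of $X$; the hypothesis forces $S$ to vanish at the point where $X$ is locally CR-equivalent to the sphere, so $S\equiv 0$ on $X$, and by the Chern--Moser theorem ($n\geq 2$) the CR manifold $X$ is everywhere locally CR-equivalent to $S^{2n+1}$.

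Next I would translate this into geometry of $(M,g)$. As $\Ric(h)=\omega_g$, the pair $(X,\alpha)$ is precisely the Boothby--Wang circle bundle of the Hodge manifold $(M,\omega_g)$, hence a Sasakian manifold: its pseudohermitian torsion vanishes, its Tanaka--Webster connection restricts on the contact distribution $H\cong\pi^{*}TM$ to the pull-back of the Levi--Civita connection of $g$, and $d\alpha_{|H}=\pi^{*}\omega_g$. A standard Sasakian curvature computation then shows that, under $H\cong\pi^{*}TM$, the Tanaka--Webster curvature of $X$ differs from the \K\ curvature tensor of $(M,g)$ only by a pure-trace correction coming from the curvature $\omega_g$ of the Boothby--Wang connection, so that the totally trace-free parts coincide: the Chern--Moser tensor of $X$ equals the Bochner tensor of $(M,g)$. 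Hence $S\equiv 0$ forces $(M,g)$ to be Bochner-flat. Now $(M,g)$ is a compact \K\ manifold which is homogeneous --- in particular of constant scalar curvature --- has vanishing Bochner tensor, and has $[\omega_g]=c_1(L)$ a positive integral class; by the classification of Bochner-flat \K\ metrics (a compact constant-scalar-curvature Bochner-flat \K\ manifold has constant holomorphic sectional curvature, hence is a complex space form), $(M,g)$ is biholomorphically isometric to $(\CP^{n},c\,\omega_{FS})$ for some $c>0$, and integrality of $c_1(L)$ then gives $L=O(m)$ for some $m\in\N$, i.e.\ $L^{*}$ is the $m$-th tensor power of the tautological bundle. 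This recovers Example~\ref{fundexample}, where $X\cong S^{2n+1}/\Z_m$ is globally CR-equivalent to a lens space, as it must be.

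I expect the real work to lie in the curvature identification of the third step --- fixing conventions for the pseudohermitian geometry of the circle bundle and checking that the difference between its Tanaka--Webster curvature and the pulled-back \K\ curvature of $M$ is genuinely pure trace, so that the Chern--Moser tensor is the Bochner tensor (if that difference were not pure trace one would get constant holomorphic sectional curvature directly, which only simplifies matters) --- and in the appeal to the Bochner-flat classification. The latter can be avoided by a topological argument: a spherical CR structure on the compact manifold $X$ provides a developing CR-immersion of the universal cover $\widetilde X$ into $S^{2n+1}$; since $\pi_1(M)=1$, the homotopy exact sequence of $X\to M$ shows $\pi_1(X)$ is the finite cyclic group generated by the fibre, so $\widetilde X$ is compact and the developing map, an immersion between equidimensional compact manifolds into the simply connected $S^{2n+1}$, is a CR-diffeomorphism; then $X=\Gamma\backslash S^{2n+1}$ with $\Gamma$ a finite group of CR automorphisms conjugate into $U(n+1)$, the fibre circle lifts to a linear circle action on $S^{2n+1}$ with smooth quotient $M$, and this forces all its weights to be equal, whence $M\cong\CP^{n}$ and $L^{*}\cong O(-m)$. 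The delicate point of this alternative is controlling the lifted circle action (its freeness, equivalently the smoothness of the quotient).
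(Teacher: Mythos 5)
Your argument is correct in outline and takes a genuinely different route from the paper's. The paper also begins by using homogeneity to upgrade local sphericity at one point to sphericity of all of $X$, but then proceeds topologically and Lie-theoretically: by a proposition of Burns--Shnider, a compact homogeneous spherical CR manifold is diffeomorphic to a lens space $S^{2n+1}/\Z_m$; the Gysin sequence of $X\to M$ then forces $b_{2j-2}=b_{2j}$ for $j=1,\dots,n$; a lemma on Poincar\'e polynomials of flag manifolds (painted Dynkin diagrams together with Akyildiz's factorization formula) shows that the only homogeneous Hodge manifolds with these Betti numbers are $\CP^n$ and the odd quadrics $Q_{2p-1}$; and the quadric is excluded because its circle bundle is the Stiefel manifold $V_2(\R^{n+2})$, whose $H^2$ differs from that of a lens space. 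Your route instead converts sphericity into Bochner-flatness of $(M,g)$ via the identification of the Chern--Moser tensor of the Boothby--Wang circle bundle with the Bochner tensor of the base --- the computation you defer is exactly Webster's theorem (\emph{On the pseudo-conformal geometry of a \K\ manifold}, Math. Z. 157 (1977)), so it is available off the shelf --- and then appeals to the classification of Bochner-flat \K\ metrics; this avoids the case-by-case Dynkin-diagram analysis and the quadric exclusion entirely. One caveat: your parenthetical statement of that classification is too strong as written, since a compact constant-scalar-curvature Bochner-flat \K\ manifold may also be locally a product $M_1(c)\times M_2(-c)$ of complex space forms with opposite holomorphic sectional curvatures (e.g.\ $\CP^1$ times a higher-genus curve with matched curvatures); in your setting this case is excluded because a homogeneous Hodge manifold is simply connected while the nonpositively curved factor forces infinite fundamental group, so the conclusion $M\cong\CP^n$ stands, but the exclusion should be said. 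Your alternative developing-map argument is essentially the content of the Burns--Shnider proposition that the paper invokes, so it would bring you back to the paper's line of proof.
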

 
We finally point out that the papers of  X. Huang (\cite{Hua}, \cite{Hua2}, \cite{HJ}) dealing with  the links of complex analytic spaces with an isolated singularity carrying a spherical CR-structure could be useful to attack Question 2 without the assumption of homogeneity.  Indeed,  by applying  Huang's results to the unit circle bundle of an embedded projective manifold $M$, seen as a link of the affine cone over $M$, one obtains some informations  on the CR-structure of the cone and consequently on the manifold $M$. Unfortunately,  these informations seem not to be enough to provide 
a positive  answer to Question 2.

\vskip 0.3cm

The paper  contains other two sections. In the first one  we recall the basic material on Kempf distortion function, TYZ expansion  and  regular quantizations while the  last section is dedicated to the proof of Theorem \ref{mainteorreg} and Theorem \ref{homsph}.

  

\section{Kempf distortion function,  Szeg\"{o} kernel and regular quantizations}\label{secreg}
Let $(L, h)$ be a positive Hermitian line bundle over a compact \K\ manifold $(M, g)$ of complex dimension $n$, such that $\Ric (h)=\omega_g$ as in the introduction.
Let
$m\geq 1$ be an integer and
consider the Kempf distortion function associated to $mg$, i.e.
\begin{equation}\label{Tm}
T_{mg} (x) =\sum_{j=0}^{d_m}h_m(s_j(x), s_j(x)).
\end{equation}
where  $h_m$ is an hermitian metric   on
$L^m$ such that $\Ric(h_m)=m\omega_g$
and   $s_0, \dots , s_{d_m}$, $d_m+1=\dim H^0(L^m)$ is  an orthonormal basis of $H^0(L^m)$
(the space of holomorphic sections of $L^m$)
with respect to the $L^2$-scalar product
$$\langle s, t \rangle_m =\int_Mh_m(s(x), t(x))\frac{\omega_g^n(x)}{n!}, \ s, t\in H^0(L^m).$$
(In the  quantum geometric context
$m^{-1}$ plays the role of Planck's constant, see e.g.
\cite{arlquant}).

As suggested by the notation this function depends only on the
\K\ metric $g$ and on $m$ and not on the orthonormal basis chosen.

The function $T_{mg}$ has appeared in the literature under different
names. The earliest one was probably the $\eta$-function of J.
Rawnsley \cite{ra1} (later renamed to $\theta$ function in
\cite{cgr1}), defined for arbitrary (not necessarily compact) K\"{a}hler manifolds,
followed by the {\em distortion function } of G. R. Kempf \cite{ke} and S. Ji
\cite{ji}, for the special case of Abelian varieties and of S. Zhang
\cite{zha} for complex projective varieties.
The metrics for which
$T_{g}$ is constant (i.e. $T_{mg}$ is constant for $m=1$) were called {\em critical} in \cite{zha} and {\em
balanced}  in \cite{do} (see  also \cite{hombal})  where S. Donaldson studies  the link between the  existence and uniqueness of  balanced metrics in a fixed  cohomology class and constant scalar curvature metrics.

One can give a
quantum-geometric interpretation of $T_{mg}$   as follows.
Take $m$ sufficiently large such  that for each point $x\in M$
there exists $s\in H^0(L^m)$ non-vanishing at $x$ (such an $m$ exists by standard algebraic geometry methods
and corresponds to the free-based point condition in Kodaira's theory).  Consider the
so called  {\em coherent states map}, namely the
holomorphic map of $M$ into the complex projective space
${\C}P^{d_m}$ given by:
\begin{equation}\label{psiglob}
\varphi_m :M\rightarrow {\C}P^{d_m}:
x\mapsto [s_0(x): \dots :s_{d_m}(x)].
\end{equation}
One can prove (see, e.g.  \cite{arlcomm}) that
\begin{equation}\label{obstr}
\varphi ^*_m\omega_{FS}=m\omega_g +
\frac{i}{2}\partial\bar\partial\log T_{mg} ,
\end{equation}
where $\omega_{FS}$ is the Fubini--Study form on
${\C}P^{d_m}$, namely the  \K\ form which in homogeneous
coordinates $[Z_0,\dots, Z_{d_m}]$ reads as \linebreak
$\omega_{FS}=\frac{i}{2}\partial\bar\partial\log \sum_{j=0}^{d_m}
|Z_j|^2$.
Since the equation  $\partial\bar\partial f=0$
implies that $f$ is constant,  it follows  by (\ref{obstr})
that   a  metric  $mg$  is balanced if and only if
it is projectively induced (via the coherent states map).
Recall that   a   K\"{a}hler metric $g$ on a complex manifold $M$
 is
 {\em projectively induced} if there exists a positive  integer $N$ and a  holomorphic map  $\psi : M\rightarrow
{\C}P^N$, such that  $\psi^*g_{FS}=g$
(the author is referred to the seminal paper of E. Calabi \cite{ca} for more details on
the subject).
Not all K\"{a}hler metrics are balanced or projectively induced.
Nevertheless, G. Tian \cite{ti0} and  W. D. Ruan \cite{ru} solved  a conjecture posed by Yau
by showing that
$\frac{\varphi_m ^*g_{FS}}{m}$ $C^{\infty}$-converges to $g$.
 In other words, any polarized
metric on a compact complex manifold is the $C^{\infty}$-limit of
(normalized) projectively induced K\"{a}hler metrics. S. Zelditch
\cite{ze} generalized the Tian--Ruan theorem by proving a complete
asymptotic expansion (called TYZ expansion) in the $C^\infty$ category, namely
\begin{equation}\label{asymptoticZ}
T_{mg}(x) \sim \sum_{j=0}^\infty  a_j(x)m^{n-j},
\end{equation}
where  $a_j(x)$, $j=0,1, \ldots$, are smooth coefficients with $a_0(x)=1$.
More precisely.
for any nonnegative integers $r,k$ the following estimate holds:
\begin{equation}\label{rest}
||T_{mg}(x)-
\sum_{j=0}^{k}a_j(x)m^{n-j}||_{C^r}\leq C_{k, r}m^{n-k-1},
\end{equation}
where $C_{k, r}$ are constants depending on $k, r$ and on the
K\"{a}hler form $\omega_g$ and $ || \cdot ||_{C^r}$ denotes  the $C^r$
norm in local coordinates.
 Later on,  Z. Lu \cite{lu} (see also \cite{liulu}),  by means of  Tian's peak section method,
 proved  that each of the coefficients $a_j(x)$ in
(\ref{asymptoticZ}) is a polynomial
of the curvature and its
covariant derivatives at $x$ of the metric $g$ which can be found
 by finitely many algebraic operations.
 Furthermore,  he explicitely computes
$a_j$ with $j\leq 3$.

Notice that   prescribing the values of  the  coefficients
of the  TYZ expansion gives rise to interesting elliptic PDEs
as shown by Z. Lu and G. Tian  \cite{logterm}. The main result obtained there  is that if the log-term
of the Szeg\"{o} kernel  of the unit disk bundle over $M$ defined in the introduction  vanishes then  $a_k=0$, for  $k>n$
($n$ being the complex dimension of $M$). Moreover Z. Lu has conjectured (private communication) that the converse is true, namely if 
$a_k=0$, for  $k>n$ then the log-term vanishes.

\vskip 0.5cm

In this paper we are interested in those \K\ 
manifolds  admitting  a regular quantization.
A \K\ manifold  $(M, g)$ admits a regular quantization if  
there exists  a positive Hermitian line bundle $(L, h)$ as above  such that 
the Kempf distortion $T_{mg} (x)$ is  a constant $T_{mg}$ (depending on $m$)
for all non-negative integer   $m\geq 1$.  In other words $(M, g)$ admits  a regular quantization iff 
the family of \K\ metrics $mg$ is balanced for all $m$. Regular quantizations play a prominent role in the study of Berezin quantization of \K\ manifolds  (see  \cite{arlquant} and  \cite{hombal} and reference therein).
From our point of view we are interested to the following two facts:
\begin{itemize}
\item
 if a   \K\ manifold  $(M, g)$ is homogeneous, i.e. the group of holomorphic isometries acts transitively on it, then $(M. g)$ admits  a regular quantization (see, e.g.   \cite[Theorem 5.1]{arlquant} for a proof).
\item
if a  \K\ manifold admits a regular quantization then it follows by the very definition of Kempf distortion function  that  
\begin{equation}\label{Tmh}
T_{mg}(x)=T_{mg}=\frac{h^{0}(L^m)}{V(M)}
\end{equation}
and so by Riemann--Roch theorem  $T_{mg}$ is a  monic  polynomial in  $m$  of degree $n$.
\end{itemize}
 Thus in the regular case  (and hence in the homogeneous case)
the TYZ expansion is finite (being a polynomimal), the coefficients 
$a_k$ are constants for all $k$ and   $a_k=0$ for $k>n$.
Hence in the light of Z. Lu conjecture mentioned above one could believe that the Szeg\"{o} kernel of the disk bundle has vanishing log-term in the regular situation.  Our  main Theorem \ref{mainteorreg} shows that this is indeed the case.

\section {The proofs of the main results}
\begin{proof}[Proof of Theorem \ref{mainteorreg}]
First notice that the Hardy space ${\mathcal H^2}(X)$ defined  in the introduction admits the Fourier decomposition into irreducible factors with respect to  the natural  $S^1$-action. More precisely, 
$${\mathcal H}^2(X)=\oplus_{m=0}^{+\infty}{\mathcal H^2_m}(X)$$
where 
${\mathcal H^2_m}(X)=\{f\in {\mathcal H}^2(X) \ | \ f(\lambda v)=\lambda^mf(v),\  \lambda\in S^1 \}$.
S. Zelditch \cite{ze} shows that the map $s\in H^0(L^m)\mapsto \hat s\in {\mathcal H}_m^2(X)$ given by:
$$\hat s(v)=v^{\otimes m}(s(x)),\ v\in X, \ x=\pi(v),\ \pi:L^*\rightarrow M$$
is  an isometry
between $H^0(L^m)$ and ${\mathcal H}_m^2(X)$. Moreover, it  not hard to see that, for every 
 $v\in X$,
\footnote{It is worth pointing out that is exactly formula (\ref{smtm}) together with
 Boutet de Monvel-Sj\"{o}strand parametrix for the Szeg\"{o} kernel 
which allows S. Zelditch \cite{ze} to get the TYZ expansion of Kempf's distortion function.}
\begin{equation}\label{smtm}
{\mathcal S}_m(v):=\sum_{j=0}^{d_m}\hat s_j(v)\overline{\hat s_j (v)}=\sum_{j=0}^{d_m}h_m (s_j(x), s_j(x))=T_{mg}(x), \ x=\pi(v)\in M,
\end{equation}
where $T_{mg}(x)$ is the  Kempf distortion function.
Thus
$${\mathcal S} (v )=\sum_{m=0}^{+\infty}{\mathcal S}_m(v)=\sum_{m=0}^{+\infty}(h_*(v, v))^mT_{mg}(x),\ v\in D, x=\pi (v).$$
Since, by assumption, the quantization $(L, h)$ of $(M, \omega_g)$ is regular one gets by (\ref{Tmh}):
$${\mathcal S}(v)=\sum_{m=0}^{+\infty}\frac{h^{0}(L^m)}{V(M)}(h_*(v, v))^m,$$
where $V(M)=\int_M\frac{\omega_g^n}{n!}$ denotes the volume of $M$.

As we have already observed in the previous section, by Riemann--Roch $h^0(L^m)$ is a monic  polynomial in  $m$  of degree $n$ so it can be written as  linear combination
of the binomial coefficients  $C_{k}^{m+k}=\frac{(m+k)!}{m!k!}$, namely
$$h^0(L^m)=\sum_{k=0}^nd_kC_{k}^{m+k},\  d_n=n!.$$
Hence
$${\mathcal S} (v )=\frac{1}{V(M)}\sum_{k=0}^nd_k\sum_{m=0}^{\infty}C_{k}^{m+k}(h_*(v,  v))^m.$$
By using the fact that 
$$\sum_{m=0}^{\infty}C_{k}^{m+k}x^m=\frac{1}{(1-x)^{k+1}}, \ 0<x<1$$
and $\rho (v)=1-h^*(v, v)$ one gets:
$${\mathcal S} (v )=
\frac{1}{V(M)}\sum_{k=0}^nd_k [\rho (v)]^{-k-1},$$
Therefore
$${\mathcal S}(v)=a(v)\rho(v)^{-n-1}$$
where 
$$a(v)=V(M)^{-1} n!+V(M)^{-1}\sum_{k=0}^{n-1} d_k[\rho(v)]^{n-k}.$$
It then follows  (cfr. (\ref{fefferman}) above) that  the log-term of the  Szeg\"{o} kernel  of the unit disk bundle $D\subset L^*$ vanishes and we are done.
\end{proof}

In order to prove Theorem \ref{homsph} we 
need the following cohomological lemma.

\begin{lem}\label{cohomhom}
Let $(M, \omega )$ be a homogeneous Hodge manifold. Assume that the Betti numbers of $M$ satisfy $b_{2j-2}=b_{2j}$ for $j=1, \dots ,n$, where we are taking the  dimension of the corresponding  cohomology groups with real coeffcients. Then $M$ is biholomorphic either to  $\C P^n$ or to the complex odd  quadric
$$Q_n=\{[Z_0, \dots , Z_{n+1}]\in \C P^{n+1}\ | \ Z_0^2+\cdots +Z_{n+1}^2=0\}, \ n=2p-1, \ n>1 .$$
\end{lem}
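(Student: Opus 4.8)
The plan is to exploit the rigid structure theory of compact homogeneous Hodge (equivalently, simply-connected compact homogeneous Kähler) manifolds. Such an $M$ is, by the Borel--Remmert theorem, biholomorphic to a product $M_1 \times \cdots \times M_r$ of irreducible compact homogeneous Kähler manifolds, each $M_i$ being a rational homogeneous space (generalized flag manifold) $G_i^{\C}/P_i$ for a simple complex Lie group $G_i^{\C}$ and a parabolic subgroup $P_i$. The Betti numbers multiply over the product, and for a flag manifold they are encoded combinatorially: the Poincaré polynomial of $G^{\C}/P$ is a product of $q$-integers $\frac{1-q^{d_j}}{1-q}$ determined by the degrees coming from the Weyl group coset $W_G/W_P$. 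My first step is therefore to translate the hypothesis $b_{2j-2}=b_{2j}$ for $j=1,\dots,n$ into a statement about these Poincaré polynomials: writing $P_M(q)=\sum b_{2k}q^k$, the condition says the coefficient sequence $(b_0,b_2,\dots,b_{2n})$ is constant, i.e. $P_M(q)=b_0(1+q+\cdots+q^n)$ with $b_0=1$ (since $M$ is connected). Equivalently $P_M(q)=\frac{1-q^{n+1}}{1-q}$.

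The second step is to show that among all flag manifolds the only ones with Poincaré polynomial of the form $\frac{1-q^{n+1}}{1-q}$ — or, more generally, the only products of flag manifolds with such a Poincaré polynomial — are $\C P^n$ and the odd-dimensional quadric $Q_n$ with $n=2p-1$. Here I would use that the even Betti numbers of $\C P^n$ are all $1$ (Poincaré polynomial $\frac{1-q^{n+1}}{1-q}$), so $\C P^n$ works for every $n$; while the quadric $Q_n$ has all even Betti numbers equal to $1$ precisely when $n$ is odd (when $n$ is even the middle Betti number $b_n$ equals $2$ because of the two rulings / the extra algebraic cycle), which is exactly the restriction $n=2p-1$ in the statement. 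That these are the only ones is a finite/combinatorial check: a nontrivial product would have a Poincaré polynomial that is a genuine product of two polynomials each of degree $\geq 1$ and each with $b_0$-coefficient $1$, forcing repeated interior coefficients $\geq 2$ unless one factor is trivial; and for an irreducible flag manifold $G^{\C}/P$ other than $\C P^n$ or $Q_{2p-1}$ one inspects the list of minimal rational homogeneous spaces (the ``cominuscule'' and more general cases) and finds that the factorization $\prod_j \frac{1-q^{d_j}}{1-q}$ with some $d_j\geq 2$ and the number of factors equal to $\dim_{\C} M$ always produces some interior Betti number $\geq 2$, except in the two listed families. A clean way to organize this: the condition $b_{2j-2}=b_{2j}$ for all $j$ up to $n=\dim_{\C}M$ forces $\sum_k b_{2k} = n+1$, but the total Betti number (Euler characteristic) of $G^{\C}/P$ equals $|W_G/W_P|$, and $|W_G/W_P|=n+1$ with $\dim G^{\C}/P=n$ is an extremely restrictive equality that one checks holds only for $\C P^n$ (where $|W|/|W_P|=n+1$) and $Q_{2p-1}$.

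I expect the main obstacle to be the case analysis in the second step: verifying that no irreducible flag manifold other than $\C P^n$ and the odd quadric, and no nontrivial product, satisfies $\chi(M)=\dim_{\C}(M)+1$ together with the full chain of Betti-number equalities. This is a finite check but requires either a uniform argument (e.g. bounding $\chi$ from below in terms of dimension using the length function on $W_G/W_P$, and noting equality forces the Hasse diagram of $W_G/W_P$ to be a single chain, which pins down the root system) or a direct pass through the classification of generalized flag manifolds. Once that combinatorial rigidity is in hand, the identification of $M$ with $\C P^n$ or $Q_n$ (rather than merely having the same Betti numbers) follows from the Borel--Remmert product decomposition plus the fact that an irreducible rational homogeneous space is determined by its marked Dynkin diagram; so the Betti data determines the diagram, hence $M$.
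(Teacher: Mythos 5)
Your proposal is correct and follows the paper's overall strategy: realize $M$ as a generalized flag manifold, translate the Betti-number hypothesis into the statement that the Poincar\'e polynomial is $1+t+\cdots+t^n$, and finish with a classification check. The combinatorial bookkeeping differs. The paper uses Akyildiz's factorization of $P(G/K,t^{1/2})$ as a product over the complementary positive roots $\alpha\in R^+\setminus R_K^+$ of the factors $(1-t^{h(\alpha)+1})/(1-t^{h(\alpha)})$, and shows by induction on the height that there is exactly one such root of each height $1,\dots,n$; the height-one case says the painted Dynkin diagram has a single black node (which in particular excludes nontrivial products), and the proof then reduces to scanning the simple root systems for diagrams with one black node and no two complementary positive roots of equal height. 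You instead dispose of products via Borel--Remmert (your remark that a nontrivial product forces $b_2\geq 2$ is correct and settles that case immediately) and recast the condition via the Bruhat decomposition as $\chi(G/P)=|W_G/W_P|=\dim_{\C}G/P+1$, i.e.\ one Schubert cell per complex dimension --- an equivalent condition (one coset per length versus one complementary root per height). Both arguments terminate in the same finite inspection of the classification, which the paper also explicitly leaves to the reader, so your sketch is at essentially the same level of completeness. One minor inaccuracy: in the $q$-integer factorization $\prod_j(1-q^{d_j})/(1-q)$ the number of factors is not $\dim_{\C}M$ in general (for $G/B$ it is the rank); this does not affect your main line of argument, which runs through the Euler characteristic.
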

\begin{proof}
Let us recall that a compact  homogeneous Hodge manifold $M$ can be always realized as a flag manifold $G/K$, where $G$ is a semisimple Lie group and $K$ is the stabilizer of an element $x \in Lie(G)$ for the adjoint action. Such a manifold can be described combinatorially by a {\it painted Dynkin diagram} $\Gamma$, that is a Dynkin diagram of a complex semisimple Lie group (see for example \cite{knapp}) with some nodes painted in black so that by deleting the black nodes from $\Gamma$ one gets the Dynkin diagram of the semisimple part of $K$. One also endows the diagram with an {\it equipment}, that is a bjection between the nodes of $\Gamma$ and a basis $\Delta = \{\alpha_1, \dots, \alpha_m \}$ of the root system $R$ of $Lie(G)$, so that the elements of $\Delta$ associated to the white nodes are a basis of the root system $R_K$ of the semisimple part of $Lie(K)$.  In the following, we will call {\it black roots} the elements of $R \setminus R_K$.  By definition of basis, every $\alpha \in R$ can be written as a combination $\alpha = \sum_{i=1}^m k_i \alpha_i$, where the $k_i$'s belong to $\Z$ and are either all non-negative or all non-positive. A non-zero root such that $k_i \geq 0$ for every $i=1, \dots, m$ is called a {\it positive root}. The set of positive roots will be denoted $R^+$. For every $\alpha = \sum_{i=1}^m k_i \alpha_i \in R^+$, the {\it height} of $\alpha$ is the integer $h(\alpha )= \sum_{i=1}^m k_i$. Let $R_K^+ = R_K \cap R^+$. The following formula for the Poincar\'e polynomial of $G/K$ can be found in \cite{aky}:

\begin{equation} \label{poincareserdica}
P(G/K, t^{1/2}) = \prod_{\alpha \in R^+ \setminus R_K^+} \frac{1 - t^{h(\alpha)+1}}{1 - t^{h(\alpha)}}.
\end{equation}

In order to prove the lemma we will use (\ref{poincareserdica}) to determine all the painted Dynkin diagrams for which 

\begin{equation} \label{equaztesi}
P(G/K, t^{1/2}) = c + c t+ c t^2 + \dots + c t^n
\end{equation}

First, notice that by evaluating both members of (\ref{equaztesi}) at $t=0$ we get $c=1$. We shall now prove by induction on  $m$  that, under the assumptions, for every $1 \leq m \leq n$ there exists exactly one root in $R^+ \setminus R_K^+$ of height $m$.

\noindent In order to do that, let us denote $f_{\alpha} =  \frac{1 - t^{h(\alpha)+1}}{1 - t^{h(\alpha)}}$ for every $\alpha \in R^+ \setminus R_K^+ $. If $k = \sharp (R^+ \setminus R_K^+)$, we then have $P(t) = P(G/K, t^{1/2}) = \prod_{i=1}^k f_{\alpha_i}(t)$. 

\noindent By a straight calculation, one easily checks that the i-th derivative of $f_{\alpha}$ at $t=0$ vanishes for $0 < i < h(\alpha)$ and equals $i!$ for $i = h(\alpha)$. By (\ref{equaztesi}) we have $P'(0) = \sum_{i=1}^k f_{\alpha_i}'(0) =1$, which clearly implies the claim for $m=1$. Now, let us assume that the claim is true for $m = 1, \dots, s-1$. Then, up to reordering the roots, we have

\begin{equation} \label{induction1}
P(t)= (1 + t + \cdots t^{s-1}) \prod_{i=s}^k f_{\alpha_i}(t)
\end{equation}

\noindent where $h(\alpha_i) \geq s$ for every $i =s, \dots, k$. This implies that

\begin{equation} \label{induction2}
P^{(s)}(0)= \frac{d^s}{dt^s}|_{t=0} \prod_{i=s}^k f_{\alpha_i}(t) = \sum_{i_s + \cdots + i_k = s} \frac{s!}{i_s! \cdots i_k!} f_{\alpha_s}^{(i_s)}(0) \cdots f_{\alpha_k}^{(i_k)}(0)
\end{equation}

\noindent and then, by (\ref{equaztesi}),

\begin{equation} \label{induction3}
\sum_{i_s + \cdots i_k = s} \frac{1}{i_s! \cdots i_k!} f_{\alpha_s}^{(i_s)}(0) \cdots f_{\alpha_k}^{(i_k)}(0) =1.
\end{equation}

 But, since $h(\alpha_j) \geq s$ and $i_j \leq s$ for every $j = s, \dots, k$,  we get $\frac{1}{s!} \sum_{j=s}^k f_{\alpha_j}^{(s)}(0) = 1$, which implies the claim for $m = s$ and then concludes the proof by induction.

In particular, for $m=1$, by definition of height it follows that the Dynkin diagram of $G/K$ must have only one black vertex. Thus, we are left with determining the position of this vertex in the diagram. More precisely, one can  conclude the proof  by showing that, for each of the complex semisimple Lie algebras, the only painted diagrams with only one black vertex and which do not admit two distinct black positive roots having the same height correspond either to $\C P^n$ or to the complex odd  quadric.
This can  be obtained  by a careful  checking of  the list of  the set of roots $R$ and a basis $\Delta$ for each of the complex semisimple Lie algebras (see, for example, Appendix C of \cite{knapp}).
The details are left to the reader.
\end{proof}

\begin{proof}[Proof of Theorem \ref{homsph}]

If $X$ were locally CR-equivalent to $S^{2n+1}$ at same point, then, by homogeneity,  this would be true  at every point, i.e. $X$ would be spherical. Since $X$ is compact and homogeneous,
Proposition 5.1 in Burns and Shnider \cite{burnsshnider} would then imply that 
$X$ is diffeomorphic to a lens space $S^{2n+1}/  \Z _m$ for some $m$. Therefore, by 
a Gysin sequence argument (see e.g. the proof of 
 Corollary 3.7 in \cite{englisramadanov}) one gets that  the Betti numbers of $M$  must satisfy the relations 
$b_{2j-2}=b_{2j}$ for $j=1, \dots ,n$. 
By Lemma \ref{cohomhom} $M$ is then  biholomorphic either  to 
$\CP^n$ or to 
$Q_{n}\subset\C P^{n+1}$, $n=2p-1>1$. The prove will be finished if we show that  the case of the odd-dimensional quadric  cannot occur when $n>1$. This follows by the fact that the unit  circle bundle over $Q_n$ is the Stiefel manifold $V_2(\R^{n+2})$
(see \cite[p. 1581]{boga})
 which is not diffeomorphic to $S^{2n+1}/  \Z _m$ for $n>2$
 (for example $H^2 (S^{2n+1}/  \Z _m, \Z)=\Z_m$ while
 $H^2 (V_2(\R^{n+2}), \Z)=0$, for $n>2$, see \cite{hatcher} and   \cite{borel}, respectively,  for the computation of the $\Z$-cohomolgy ring of the lens spaces and of the Stiefel manifolds). This ends the proof of the theorem.
 \end{proof} 

\begin{remar}\rm
In the proof of Theorem  \ref{homsph}  the relation between the  Betti numbers of $M$ (and the homogeneity) allows to  deduce that our manifold  $M$  is  biholomorphic to a complex projective space.
We want to point out that there exist complex  algebraic  surfaces
with the same Betti numbers of  a complex projective but not biholomorphic to it. 
These are the celebrated {\em fake projective spaces} (see, e.g.  \cite{fake}, for details). Such a  \K\ surface $(F, g_F)$ is 
 obtained by taking the quotient of the unit ball $B^2\subset \C^2$ by a subgroup of biholomorphisms $\Gamma$ of $B^2$, i.e. $F=B^2/\Gamma$ and the metric $g_F$ is the unique (up to homotheties)  \K -Einstein metric on $F$ with negative scalar curvature whose pull-back to  $B^2$ is given by the hyperbolic metric  $g_{hyp}$.

Now, consider a geometric   quantization $(L, h)$ for $(F, g_F)$ and the corresponding Kempf distortion function $T_{mg_F}$. It is not hard to see that the coefficients in the  TYZ expansion of  $T_{mg_F}$   are constants and, moreover, $a_k=0$, for $k>2$. Indeed this follows by the fact that,  by Lu's theorem the coefficients depend on the curvature and are the same as those of $(B^2, g_{hyp})$ for which the computation  is  well-known. Hence, if one believes the validity of   Lu's  conjecture mentioned above (see the end of Section \ref{secreg}), $(F, g_F)$ would be an example of  projective algebraic  surface  
with the same Betti numbers of the projective space, with vanishing log-term  not biholomorphic to  $\CP^2$. Observe that in  this case even if all  coefficients of TYZ  expansion are  constants the quantization is not regular  and so the vanishing of the log-term cannot be deduced by  Theorem \ref{mainteorreg}. Indeed if  the quantization were regular then $g_F$ would be projectively induced and the same would be true for  the metric $g_{hyp}$ on $B^2$, in contrast with a well-known result of Calabi \cite{ca}.
\end{remar}

 \vskip 0.3cm

\end{document}